\numberwithin{equation}{section}
\newtheorem{theorem}{Theorem}[section]
\newtheorem{lemma}[theorem]{Lemma}
\newtheorem{corollary}[theorem]{Corollary}
\newtheorem{remark}[theorem]{Remark}
\newtheorem{definition}[theorem]{Definition}
\title{Local Topological Constraints on Berry Curvature in Spin--Orbit Coupled BECs}
\author{Alexander Pigazzini, Magdalena Toda}
\date{}
\begin{document}

\maketitle
\begin{abstract}
We establish a local topological obstruction to flattening Berry curvature in spin-orbit-coupled Bose-Einstein condensates (SOC BECs), valid even when the global Chern number vanishes. For a generic two-component SOC BEC, the extended parameter space \(M=T^{2}_{\mathrm{BZ}}\times S^{1}_{\phi_{+}}\times S^{1}_{\phi_{-}}\) carries a Kaluza--Klein metric \(g_{M}\) and a natural metric connection \(\nabla^{C}\) whose torsion 3-form encodes the synthetic gauge fields. Its harmonic part defines a mixed cohomology class
\[
[\omega]\in\bigl(H^{2}(T^{2}_{\mathrm{BZ}})\otimes H^{1}(S^{1}_{\phi_{+}})\bigr)\oplus\bigl(H^{2}(T^{2}_{\mathrm{BZ}})\otimes H^{1}(S^{1}_{\phi_{-}})\bigr),
\]
whose mixed tensor rank equals one.
By adapting the \textit{Pigazzini--Toda lower bound} to the Kaluza--Klein setting through exact pointwise curvature analysis under the assumption of constant Berry curvatures, we show that the obstruction kernel \(\mathcal{K}\) vanishes and establish a three-level non-reducibility structure for the physical metric: \textup{(i)} for the one-parameter deformation family interpolating between the product and physical metrics, \(\dim\mathfrak{hol}^{\mathrm{off}}(\nabla^{C_\varepsilon})\geq 1\) at every point for all \(\varepsilon\in(0,1)\); \textup{(ii)} at the physical metric, every non-Bismut torsion representative of \([\omega]\) yields \(\dim\mathfrak{hol}^{\mathrm{off}}\geq 1\) on an open set; \textup{(iii)} the horizontal--vertical splitting is not invariant under the Riemannian holonomy of the physical metric, with \(\dim\mathfrak{hol}^{\mathrm{off}}(\nabla^{\mathrm{LC}})\geq 1\) at every point.
These bounds prevent the complete gauging-away of Berry phases even in regimes with zero net topological charge. The corrected rank \(r^{\sharp}\) detects the robustness of the topological constraint under phase-reduction protocols: no single phase-locking can eliminate the obstruction at the physical metric, a distinction invisible to the mixed rank \(r\) alone. This provides the first cohomological lower bound certifying locally irremovable curvature in SOC BECs beyond the Chern-number paradigm.
\end{abstract}

\noindent\textbf{Keywords:} Berry curvature; spin--orbit-coupled Bose--Einstein condensates; synthetic gauge fields; holonomy with torsion; mixed cohomology; Kaluza--Klein geometry. 
\\
\textbf{MSC (2020):} 53C29, 53C80, 81V45, 82B10, 51P05, 55S20, 57R19, 57R56, 58A12.

\medskip
\section{Introduction}

The study of spin--orbit-coupled (SOC) Bose--Einstein condensates (BECs) stands at a rich confluence of differential geometry, quantum many-body physics, and synthetic gauge theory.
This interdisciplinary effort is part of the broader pursuit of artificial gauge fields in engineered quantum systems, spanning ultracold atoms, photonics, and solid-state platforms \cite{Aidelsburger2018}.
Advances in generating artificial gauge fields with Raman lasers have made these systems a versatile laboratory for exploring Berry curvature, synthetic magnetic fields, and non-Abelian textures with unprecedented control \cite{Galitski2013,Goldman2014,StamperKurn2013}.
These studies are often conducted in toroidal traps, which can serve as magnetic storage rings for BECs \cite{Arnold2006}, providing a well-controlled environment for synthetic gauge field experiments.
Remarkable achievements include the observation of Dirac monopoles in synthetic magnetic fields \cite{Ray2014}, demonstrating the high degree of geometric control now attainable.
The geometric phase underpinning these phenomena traces back to the foundational work of Simon \cite{Simon1983}, while the experimental realization of artificial gauge fields has been comprehensively reviewed in \cite{Dalibard2011}.

A persistent theoretical challenge is to identify \emph{local} topological obstructions that constrain the curvature even when global invariants---such as the first Chern number---vanish.
While the Chern number quantizes the net flux, its vanishing does not guarantee that the curvature can be locally flattened; geometric obstructions may prevent its simultaneous annihilation along independent directions in the parameter space.

This work addresses that challenge by bringing a recent geometric result into physical focus.
In \cite{PigazziniToda2025}, a lower bound was established for the off-diagonal holonomy of metric connections with totally skew-symmetric torsion on product manifolds.
The bound is expressed in terms of a \emph{mixed} de Rham cohomology class $[\omega]\in H^{p}(M_1)\otimes H^{q}(M_2)$ associated with the torsion; its \emph{mixed tensor rank} $r$ counts the minimal number of simple tensors needed to represent the class, and after subtracting contributions from parallel forms (which do not generate off-diagonal curvature) one obtains a corrected integer $r^{\sharp}$ that bounds the dimension of the off-diagonal holonomy algebra:
\[
\dim\mathfrak{hol}^{\mathrm{off}}(\nabla^{C})\ge r^{\sharp}=r-\dim\mathcal{K}.
\]

Here we show how this abstract geometric framework can be \emph{adapted}---not applied as a black box---to a concrete physical setting: a generic two-component SOC BEC confined in a quasi-two-dimensional toroidal trap.
Such a system possesses three universal, experimentally accessible degrees of freedom:
the two crystal momenta forming a Brillouin-zone torus $T^{2}_{\mathrm{BZ}}$,
a global $U(1)$ phase $\phi_{+}$ associated with particle-number conservation,
and a relative $U(1)$ phase $\phi_{-}$ between the two internal spin components \cite{Lin2011,Matthews2018DoubleRing}.
We therefore take as the extended parameter space the manifold \[
M=T_{BZ}^{2}\times S_{\phi_{+}}^{1}\times S_{\phi_{-}}^{1}.
\]
We emphasize that treating $M$ as a global product manifold requires the underlying principal bundle to be trivial, which holds precisely in the physical regime of vanishing total Chern number ($c_{\text{tot}}=0$) that is central to our main result. In configurations with non-zero net topological charge, this product structure should be understood as a local description in a gauge patch, sufficient for deriving local curvature bounds.

The first main task of the paper is to construct, in this space, a natural geometric structure to which the theorem of \cite{PigazziniToda2025} applies.
We introduce a metric $g_{M}$ of Kaluza--Klein type, which couples the base metric on $T^{2}_{\mathrm{BZ}}$ to the $U(1)$-valued connection forms that encode the synthetic gauge fields of the SOC BEC.
On $(M,g_{M})$ we define a metric connection $\nabla^{C}$ whose torsion three-form is the natural Kaluza--Klein 3-form
\[
T^{\flat}= F^{(+)}\wedge\Theta^{(+)}+F^{(-)}\wedge\Theta^{(-)},
\]
built from the synthetic Berry curvatures $F^{(\pm)}$ and the connection 1-forms $\Theta^{(\pm)}=d\phi_{\pm}+\pi^{*}A^{(\pm)}$.
The harmonic part of $T^{\flat}$ with respect to $g_{M}$ then defines a mixed cohomology class
\[
[\omega]\in H^{3}(M)\cong\bigl(H^{2}(T^{2}_{\mathrm{BZ}})\otimes H^{1}(S^{1}_{\phi_{+}})\bigr)\oplus
\bigl(H^{2}(T^{2}_{\mathrm{BZ}})\otimes H^{1}(S^{1}_{\phi_{-}})\bigr).
\]

The second main task is to compute the topological data that enters the general bound.
We prove that, under the generic condition that the two nearly degenerate bands carry non-zero Berry curvature classes $[F^{(+)}], [F^{(-)}] \in H^{2}(T^{2}_{\mathrm{BZ}})$, the mixed tensor rank of $[\omega]$ attains its maximal possible value within this geometry. Because $\dim H^{2}(T^{2}_{\mathrm{BZ}}) = 1$, any two non-zero classes in this space are cohomologically proportional. Consequently, the mixed class $[\omega]$ can be written as a single simple tensor
\[
[\omega] = [F^{(+)}] \otimes \bigl([d\phi_{+}] + \lambda [d\phi_{-}]\bigr),
\]
and its mixed tensor rank is therefore $r = 1$.
Moreover, owing to the specific form of the metric $g_{M}$, the space of vertical parallel one-forms is algebraically incompatible with the cohomological data of $[\omega]$, which forces the obstruction kernel $\mathcal{K}$ to vanish.
Consequently the reduced rank is $r^{\sharp}=1$. Under the physically motivated assumption of constant Berry curvatures, exact pointwise curvature analysis reveals a three-level non-reducibility structure at the physical metric:
\begin{itemize}
\item[\textup{(i)}] For the deformation family $g_\varepsilon$ interpolating between the product metric ($\varepsilon=0$) and the physical Kaluza--Klein metric ($\varepsilon=1$), $\dim\mathfrak{hol}^{\mathrm{off}}(\nabla^{C_\varepsilon})\ge 1$ at every point for all $\varepsilon\in(0,1)$.
\item[\textup{(ii)}] At the physical metric, every non-Bismut torsion representative of $[\omega]$ yields $\dim\mathfrak{hol}^{\mathrm{off}}\ge 1$ on an open non-empty subset.
\item[\textup{(iii)}] The Riemannian holonomy of $g_M$ satisfies $\dim\mathfrak{hol}^{\mathrm{off}}(\nabla^{\mathrm{LC}})\ge 1$ at every point.
\end{itemize}
The natural Kaluza--Klein torsion---identified as the Bismut torsion---produces an exact cancellation of the off-diagonal curvature at the physical metric, a non-generic phenomenon characterised below.

This lower bound has immediate physical consequences.
It implies that the curvature of the synthetic gauge connection cannot be made block-diagonal with respect to the splitting between momentum and phase directions; at least one independent off-diagonal operator persists, mixing the Brillouin zone with the two phase directions.
Crucially, the bound detects the topological coupling of the momentum torus to the two distinct $U(1)$ phases, and therefore remains non-trivial even when the total Chern number of the lowest band is tuned to zero.
Thus $r^{\sharp}$ provides a cohomological certificate of locally irremovable Berry curvature, invisible to conventional global invariants.
The result suggests interferometric protocols that separately probe the Berry phases associated with $\phi_{+}$ and $\phi_{-}$, offering a way to detect this obstruction even in regimes of vanishing net vorticity.

The paper is organized as follows.
Section \ref{sec:geometry} constructs the extended parameter space $M$, the Kaluza--Klein metric $g_{M}$, and the metric connection $\nabla^{C}$ with torsion $T^{\flat}$.
Section \ref{sec:cohomology} analyses the cohomology of $M$, computes the mixed tensor rank $r$, and shows that $\mathcal{K}=0$ in the present geometry.
Section \ref{sec:KK-foundations} derives the exact curvature formula for the deformation family, identifies the Bismut cancellation at the physical metric, and establishes the lower bound through non-Bismut torsion representatives and the Riemannian holonomy.
Section \ref{sec:bound} states the main theorem with the three-level non-reducibility structure and discusses its physical interpretation.
Section \ref{sec:consequences} elaborates the physical implications of the bound, including its persistence when the total Chern number vanishes and its robustness under metric deformations.
Section \ref{sec:examples} illustrates how the same cohomological data reproduce, in a particular Rashba--Dresselhaus texture, the familiar Chern-number bound on the vortex number, and demonstrates with a tuned example that the holonomy bound remains valid even when the Chern number vanishes.
Section \ref{sec:protocol} outlines prospective interferometric tests for detecting the locally irremovable curvature and indicates directions for future work.
Section \ref{sec:conclusions} summarises the results and discusses broader implications for topological quantum matter.

\section{Geometric Construction of the Extended Parameter Space}
\label{sec:geometry}

A generic two-component spin--orbit-coupled Bose--Einstein condensate (SOC BEC) confined in a quasi-two-dimensional toroidal trap is characterized by three independent, experimentally accessible degrees of freedom: the crystal momenta $(k_x,k_y)$ spanning a Brillouin zone, a global $U(1)$ phase $\phi_{+}$ associated with particle-number conservation, and a relative $U(1)$ phase $\phi_{-}$ between the two internal spin states \cite{Lin2011,Matthews2018DoubleRing}.  

We therefore take as the \emph{extended parameter space} the smooth, compact, oriented product manifold
\[
M = T^{2}_{\mathrm{BZ}} \times S^{1}_{\phi_{+}} \times S^{1}_{\phi_{-}} .
\]
Our goal in this section is to endow $M$ with a Riemannian structure that naturally incorporates the synthetic gauge fields of the SOC BEC, and to define on it a metric connection whose torsion three-form coincides with the physical Berry-curvature three-form.

\subsection{Kaluza--Klein metric on the extended space}
\label{subsec:metric}

Let $g_{\mathrm{BZ}} = dk_x^{2}+dk_y^{2}$ be the flat metric on the Brillouin-zone torus $T^{2}_{\mathrm{BZ}}$.  
On each phase circle we introduce the standard angular coordinate with period $2\pi$; the canonical $U(1)$-invariant metric on $S^{1}$ is $d\phi^{2}$.  
The synthetic gauge fields of the SOC BEC provide, for each of the two independent $U(1)$ directions, a Berry connection.  
Denote by $A^{(+)}, A^{(-)}\in\Omega^{1}(T^{2}_{\mathrm{BZ}})$ the corresponding real one-forms (the components of the synthetic $\mathfrak{su}(2)$ connection along the two independent $U(1)$ generators).  
Their curvatures are the Berry curvature two-forms $F^{(\pm)}=dA^{(\pm)}$.

Following the Kaluza--Klein prescription we define a metric $g_{M}$ on $M$ that couples the base metric to the connection forms:
\begin{equation}\label{eq:KKmetric}
g_{M}= \pi^{*}g_{\mathrm{BZ}} 
      + \bigl(d\phi_{+}+\pi^{*}A^{(+)}\bigr)^{2}
      + \bigl(d\phi_{-}+\pi^{*}A^{(-)}\bigr)^{2},
\end{equation}
where $\pi:M\to T^{2}_{\mathrm{BZ}}$ is the projection onto the momentum coordinates.  
The one-forms
\[
\Theta^{(+)}:=d\phi_{+}+\pi^{*}A^{(+)},\qquad 
\Theta^{(-)}:=d\phi_{-}+\pi^{*}A^{(-)}
\]
are the \emph{horizontal lifts} of the phase coordinates; they are globally defined on $M$ and are orthogonal to the base directions with respect to $g_{M}$.

\begin{lemma}[Properties of the metric $g_{M}$]
\label{lem:KK-properties}
The metric \eqref{eq:KKmetric} satisfies the following.
\begin{enumerate}
\item $g_{M}$ is smooth, positive definite and makes $\pi:(M,g_{M})\to(T^{2}_{\mathrm{BZ}},g_{\mathrm{BZ}})$ a Riemannian submersion.
\item The fibres $\pi^{-1}(k)\cong S^{1}_{\phi_{+}}\times S^{1}_{\phi_{-}}$ are flat $2$-tori with the induced metric $(d\phi_{+})^{2}+(d\phi_{-})^{2}$.
\item The horizontal distribution $\mathcal{H}=\ker(d\phi_{+}+\pi^{*}A^{(+)})\cap\ker(d\phi_{-}+\pi^{*}A^{(-)})$ is orthogonal to the vertical distribution $\mathcal{V}=\langle\partial_{\phi_{+}},\partial_{\phi_{-}}\rangle$.
\item If $A^{(\pm)}=0$ then $g_{M}$ reduces to the trivial product metric $g_{\mathrm{BZ}}\oplus d\phi_{+}^{2}\oplus d\phi_{-}^{2}$.
\end{enumerate}
\end{lemma}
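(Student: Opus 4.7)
The plan is a direct verification of each property, treating the lemma as a structural check on the Kaluza--Klein metric~\eqref{eq:KKmetric}. I would work in the adapted local coordinates $(k_{x},k_{y},\phi_{+},\phi_{-})$ and exploit the block structure that~\eqref{eq:KKmetric} exhibits once expanded.

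For item~(1), smoothness of $g_{M}$ is inherited from smoothness of the one-forms $A^{(\pm)}$ on $T^{2}_{\mathrm{BZ}}$, hence of their pullbacks and of $\Theta^{(\pm)}$. To establish positive-definiteness, I would expand~\eqref{eq:KKmetric} to obtain the coordinate matrix
\[
[g_{M}]=\begin{pmatrix} I_{2}+A^{(+)}A^{(+)\mathrm{T}}+A^{(-)}A^{(-)\mathrm{T}} & A^{(+)} & A^{(-)}\\ A^{(+)\mathrm{T}} & 1 & 0\\ A^{(-)\mathrm{T}} & 0 & 1 \end{pmatrix},
\]
viewing each $A^{(\pm)}$ as the column of components $(A^{(\pm)}_{x},A^{(\pm)}_{y})^{\mathrm{T}}$. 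Taking the Schur complement against the lower-right identity block returns exactly $I_{2}$, which together with positivity of the vertical block yields positive-definiteness of $[g_{M}]$. The Riemannian submersion property then reduces to checking that the unique horizontal lift $\tilde X$ of $X\in T_{\pi(p)}T^{2}_{\mathrm{BZ}}$, characterised by $\Theta^{(+)}(\tilde X)=\Theta^{(-)}(\tilde X)=0$, satisfies $g_{M}(\tilde X,\tilde X)=g_{\mathrm{BZ}}(X,X)$; this is immediate since both $\Theta$-squared terms in~\eqref{eq:KKmetric} vanish on $\tilde X$.

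For item~(2), I would restrict $g_{M}$ to a fibre $\pi^{-1}(k_{0})$; on vertical tangent vectors $dk_{x}=dk_{y}=0$, so $\pi^{*}g_{\mathrm{BZ}}=0$ and $\pi^{*}A^{(\pm)}=0$, and the induced metric is exactly $d\phi_{+}^{2}+d\phi_{-}^{2}$, the flat product metric on $T^{2}$. Item~(3) reduces to a one-line computation: for $V\in\mathcal{H}$, the contribution $\pi^{*}g_{\mathrm{BZ}}(\partial_{\phi_{\alpha}},V)=0$ because $d\pi(\partial_{\phi_{\alpha}})=0$, while each $\Theta$-squared term vanishes because one factor is $\Theta^{(\alpha)}(V)=0$. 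Item~(4) is immediate upon substituting $A^{(\pm)}=0$ in~\eqref{eq:KKmetric}.

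The only step with any real content is the Riemannian submersion check in~(1); the rest is essentially symbolic bookkeeping, and I do not anticipate any genuine obstacle, since~\eqref{eq:KKmetric} is the canonical Kaluza--Klein form and the four items simply record the standard properties of such metrics.
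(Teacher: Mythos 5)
Your proof is correct and follows essentially the same direct-verification approach as the paper. The only difference is your Schur-complement argument for positive-definiteness, where the paper simply notes that each summand in~\eqref{eq:KKmetric} is non-negative and the sum vanishes only when $d\pi(v)=0$ and $\Theta^{(\pm)}(v)=0$, forcing $v=0$; both routes are sound and equally short.
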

\begin{proof}
Smoothness follows from the smoothness of $g_{\mathrm{BZ}}$ and $A^{(\pm)}$.  
Positive definiteness is clear because each term in \eqref{eq:KKmetric} is non‑negative and their sum vanishes only when all differentials vanish.  

For any $k\in T^{2}_{\mathrm{BZ}}$ the restriction of $g_{M}$ to the fibre $\pi^{-1}(k)$ is $(d\phi_{+})^{2}+(d\phi_{-})^{2}$, independent of $k$; hence the fibres are isometric to a flat $2$-torus.  
The map $\pi$ is a Riemannian submersion because, by construction, $g_{M}$ restricted to $\mathcal{H}$ coincides with $\pi^{*}g_{\mathrm{BZ}}$.

Orthogonality $\mathcal{H}\perp\mathcal{V}$ follows directly from $g_{M}(\partial_{\phi_{\pm}},X)=0$ for every $X\in\mathcal{H}$, which is a consequence of the definition of $\Theta^{(\pm)}$.

The last statement is immediate from \eqref{eq:KKmetric}.
\end{proof}

\begin{remark}
\label{rem:KK-physical}
Equation \eqref{eq:KKmetric} is physically natural: it measures distances in the extended parameter space by combining the kinetic contribution from momentum changes with the ``phase cost'' induced by the synthetic gauge potentials.  
If $A^{(\pm)}=0$, the metric reduces to the trivial product metric $g_{\mathrm{BZ}}\oplus d\phi_{+}^{2}\oplus d\phi_{-}^{2}$.
\end{remark}

\subsection{Metric connection with prescribed torsion}
\label{subsec:connection}

On the Riemannian manifold $(M,g_{M})$ we now construct a metric connection $\nabla^{C}$ whose torsion three-form is built from the physical gauge fields.  
Let $\nabla^{LC}$ be the Levi‑Civita connection of $g_{M}$.

Define the three‑form
\begin{equation}\label{eq:torsion-form}
T^{b}:=F^{(+)}\wedge \Theta^{(+)}+F^{(-)}\wedge \Theta^{(-)}\in\Omega^{3}(M),
\end{equation}
where $\Theta^{(\pm)}=d\phi_{\pm}+\pi^{*}A^{(\pm)}$ are the connection 1-forms introduced in Section \ref{subsec:metric}, and $F^{(\pm)}=dA^{(\pm)}$ are pulled back to $M$ via $\pi$.  
$T^{b}$ is the natural Kaluza--Klein torsion 3-form of the principal $U(1)_{+}\times U(1)_{-}$-bundle; it has pure bidegree $(2,1)$ with respect to the splitting $TM=\mathcal{H}\oplus\mathcal{V}$, since $F^{(\pm)}\in\Gamma(\Lambda^{2}\mathcal{H}^{*})$ and $\Theta^{(\pm)}\in\Gamma(\mathcal{V}^{*}\oplus\mathcal{H}^{*})$, and the $(3,0)$-component vanishes because $\dim\mathcal{H}=2$.

Introduce the $(2,1)$‑tensor $K$ by
\[
K(X,Y,Z):=\frac{1}{2}\,T^{b}(X,Y,Z)\qquad (X,Y,Z\in\mathfrak{X}(M)).
\]
The connection
\[
\nabla^{C}_{X}Y:=\nabla^{LC}_{X}Y+K(X,Y,\cdot)^{\sharp}
\]
is by construction metric ($\nabla^{C}g_{M}=0$) and has totally skew‑symmetric torsion $T^{b}$.  
Indeed, for any vector fields $X,Y$,
\[
T^{b}(X,Y,\cdot)^{\sharp}=\nabla^{C}_{X}Y-\nabla^{C}_{Y}X-[X,Y],
\]
which is precisely the definition of the torsion tensor of $\nabla^{C}$.  

Because $T^{b}$ is not parallel with respect to $\nabla^{LC}$ unless the gauge fields are flat, the connection $\nabla^{C}$ satisfies the non‑parallelism condition $\nabla^{LC}T^{b}\neq0$ required for the applicability of the general theorem \cite{PigazziniToda2025}.

\begin{remark}
\label{rem:torsion-cohomology}
The three‑form $T^{b}$ is not closed in general; its exterior derivative contains terms quadratic in the curvatures $F^{(\pm)}$.  
Its de Rham cohomology class $[T^{b}]\in H^{3}(M;\mathbb{R})$, however, depends only on the harmonic part of $T^{b}$ with respect to $g_{M}$, as will be computed in Section \ref{sec:cohomology}.
\end{remark}

\subsection{Parallel forms in the Kaluza--Klein geometry}
\label{subsec:parallel-forms}

A key ingredient of the topological bound is the space of parallel forms on the factors.  
For the base torus with the flat metric $g_{\mathrm{BZ}}$ we have the well‑known result
\[
\mathcal{P}_{2}(T^{2}_{\mathrm{BZ}},g_{\mathrm{BZ}})=H^{2}(T^{2}_{\mathrm{BZ}})=\mathbb{R}\,\mathrm{vol}_{\mathrm{BZ}},\qquad 
\mathrm{vol}_{\mathrm{BZ}}=dk_x\wedge dk_y .
\]

The situation for the phase circles is more subtle.  
On a circle endowed with the standard metric $d\phi^{2}$, the one‑form $d\phi$ is parallel, hence $\mathcal{P}_{1}(S^{1},d\phi^{2})=\mathbb{R}\,d\phi$.  
In the Kaluza--Klein metric \eqref{eq:KKmetric} the geometry of the fibres is ``twisted'' by the connection forms $A^{(\pm)}$.  
The vector fields $\partial_{\phi_{+}},\partial_{\phi_{-}}$ are Killing fields of $g_{M}$, but their dual one‑forms are $\Theta^{(\pm)}$, not $d\phi_{\pm}$.

\begin{lemma}[Vertical parallel 1-forms in the Kaluza--Klein metric]
\label{lem:dphi-not-parallel}
For the Kaluza--Klein metric $g_{M}$ with non-vanishing Berry curvatures $F^{(\pm)}\neq 0$, the space of vertical 1-forms $\eta = c_{+}d\phi_{+}+c_{-}d\phi_{-}$ that are parallel with respect to $g_{M}$ is one-dimensional:
\[
\mathcal{P}_{1}^{\mathrm{vert}}(M,g_{M}) = \bigl\{\,c_{+}d\phi_{+}+c_{-}d\phi_{-}\;\big|\; c_{+}\,c^{(+)}+c_{-}\,c^{(-)}=0\,\bigr\},
\qquad \dim\mathcal{P}_{1}^{\mathrm{vert}}=1,
\]
where $c^{(\pm)}$ are the constants defined by $F^{(\pm)}=c^{(\pm)}\mathrm{vol}_{\mathrm{BZ}}$ (when the Berry curvatures are constant).
In particular, neither $d\phi_{+}$ nor $d\phi_{-}$ is individually parallel unless $c^{(\mp)}=0$.
\end{lemma}
\begin{proof}
Let $\eta = c_{+}d\phi_{+}+c_{-}d\phi_{-}$ be a vertical 1-form with constant coefficients. From the Koszul formula applied to the orthonormal coframe $\{e^{1}=dk_x,\, e^{2}=dk_y,\, e^{3}=\Theta^{(+)},\, e^{4}=\Theta^{(-)}\}$ of $g_{M}$, the covariant derivatives of the vertical coframe forms are
\[
\nabla^{LC}e^{3} = \tfrac{c^{(+)}}{2}(e^{1}\otimes e^{2}-e^{2}\otimes e^{1}),
\qquad
\nabla^{LC}e^{4} = \tfrac{c^{(-)}}{2}(e^{1}\otimes e^{2}-e^{2}\otimes e^{1}).
\]
Since $\eta = c_{+}e^{3}+c_{-}e^{4}$ (at any point in an adapted gauge), we obtain
\[
\nabla^{LC}\eta = \tfrac{1}{2}(c_{+}c^{(+)}+c_{-}c^{(-)})(e^{1}\otimes e^{2}-e^{2}\otimes e^{1}).
\]
This vanishes if and only if $c_{+}c^{(+)}+c_{-}c^{(-)}=0$, which defines a one-dimensional subspace whenever $c^{(\pm)}\neq 0$.
\end{proof}

\begin{remark}[Comparison with the product metric]
\label{cor:trivial-parallel-fibre}
For the product metric $g_{0}=g_{\mathrm{BZ}}\oplus d\phi_{+}^{2}\oplus d\phi_{-}^{2}$ (corresponding to $\varepsilon=0$ in the deformation family), the covariant derivative of every vertical 1-form vanishes identically, yielding $\dim\mathcal{P}_{1}^{\mathrm{vert}}(M,g_{0})=2$. For $\varepsilon>0$, the dimension drops to $1$, as established in Lemma \ref{lem:dphi-not-parallel}. This discontinuity is the geometric mechanism that makes the obstruction kernel $\mathcal{K}$ vanish for the physical metric while being non-trivial for the product metric; see Section \ref{subsec:kernel}.
\end{remark}

\subsection{Summary of the geometric setup}
\label{subsec:summary-geometry}

We have constructed:
\begin{enumerate}
\item an extended parameter space $M=T^{2}_{\mathrm{BZ}}\times S^{1}_{\phi_{+}}\times S^{1}_{\phi_{-}}$;
\item a Kaluza--Klein metric $g_{M}$ on $M$ given by \eqref{eq:KKmetric}, coupling the momentum directions to the two independent $U(1)$ phases via the synthetic Berry connections $A^{(\pm)}$;
\item a metric connection $\nabla^{C}$ on $(M,g_{M})$ whose torsion three‑form is $T^{b}=F^{(+)}\wedge\Theta^{(+)}+F^{(-)}\wedge\Theta^{(-)}$, the natural Kaluza--Klein torsion of the principal bundle.
\end{enumerate}
This construction provides the Riemannian framework on which we will adapt the cohomological analysis of \cite{PigazziniToda2025}.  
In the following section we analyze the cohomological data that arise from this geometry.

\section{Cohomology of the Extended Space and the Mixed Tensor Rank}
\label{sec:cohomology}

With the geometric structure $(M,g_{M},\nabla^{C})$ established in Section \ref{sec:geometry}, we now analyze the cohomological data that enter the general lower bound.
Recall that the torsion three-form $T^{\flat}\in\Omega^{3}(M)$ is not necessarily closed, but its de Rham cohomology class $[T^{\flat}]\in H^{3}(M;\mathbb{R})$ is well defined.
Let $\omega_{h}$ be the unique $g_{M}$-harmonic representative of this class; then
\[
[\omega]:=[\omega_{h}]=[T^{\flat}]\in H^{3}(M;\mathbb{R}).
\]

\subsection{Künneth decomposition of $H^{3}(M)$}

Since $M$ is a product $T^{2}_{\mathrm{BZ}}\times S^{1}_{\phi_{+}}\times S^{1}_{\phi_{-}}$ and each factor is oriented and compact, the Künneth theorem gives
\[
H^{3}(M;\mathbb{R})\cong 
\bigl(H^{2}(T^{2}_{\mathrm{BZ}})\otimes H^{1}(S^{1}_{\phi_{+}})\bigr)
\oplus
\bigl(H^{2}(T^{2}_{\mathrm{BZ}})\otimes H^{1}(S^{1}_{\phi_{-}})\bigr)
\oplus
\bigl(H^{1}(T^{2}_{\mathrm{BZ}})\otimes H^{1}(S^{1}_{\phi_{+}})\otimes H^{1}(S^{1}_{\phi_{-}})\bigr),
\]
where the first two summands form the $(2,1)$-component ($\cong\mathbb{R}^{2}$) and the third is the $(1,1,1)$-component ($\cong\mathbb{R}^{2}$); all remaining Künneth summands vanish since $H^{3}(T^{2}_{\mathrm{BZ}})=H^{2}(S^{1}_{\phi_{\pm}})=0$. Consequently $\dim H^{3}(M)=4$.

Since the torsion $T^{\flat}$ has pure bidegree $(2,1)$ with respect to the splitting $TM=\mathcal{H}\oplus\mathcal{V}$, its cohomology class $[\omega]$ lies entirely in the $(2,1)$-summand; the $(1,1,1)$-component vanishes identically. The $(2,1)$-projection of $[\omega]$ takes the form
\begin{equation}\label{eq:KunnethDecomp}
[\omega]=[\alpha_{+}]\otimes[d\phi_{+}] + [\alpha_{-}]\otimes[d\phi_{-}],
\end{equation}
where $[\alpha_{\pm}]\in H^{2}(T^{2}_{\mathrm{BZ}})$ and $[d\phi_{\pm}]\in H^{1}(S^{1}_{\phi_{\pm}})$ are the fundamental cohomology classes of the factors.

For the harmonic representative $\omega_{h}$ of our torsion class, the Hodge decomposition with respect to $g_{M}$ yields
\[
\omega_{h}= \alpha_{+}\wedge d\phi_{+} + \alpha_{-}\wedge d\phi_{-} + d\eta,
\]
where $\alpha_{\pm}\in\mathcal{H}^{2}(T^{2}_{\mathrm{BZ}},g_{\mathrm{BZ}})$ are $g_{\mathrm{BZ}}$-harmonic two-forms on the Brillouin torus, and $\eta\in\Omega^{2}(M)$ is a coexact two-form. The term $d\eta$ is exact and therefore orthogonal to harmonic forms in the $L^{2}$-inner product; it does not contribute to the cohomology class $[\omega]=[\omega_{h}]$.

\begin{remark}
The representatives $d\phi_{\pm}$ are harmonic with respect to the standard metric on the circles. In the Künneth decomposition \eqref{eq:KunnethDecomp}, they serve as cohomological representatives of the fundamental classes $[d\phi_{\pm}]\in H^{1}(S^{1}_{\phi_{\pm}})$; the specific choice of representative within each class does not affect the decomposition.
\end{remark}

\subsection{Mixed tensor rank of the torsion class}
\label{subsec:tensor-rank}

The \emph{mixed tensor rank} of the $(2,1)$-component of a cohomology class $[\omega]\in H^{3}(M)$ as in \eqref{eq:KunnethDecomp} is defined as the minimal number of simple tensors required to represent it:
\[
\operatorname{rank}_{\mathbb{R}}([\eta]) 
:= \min\Bigl\{ m \;\Big|\; [\eta]=\sum_{i=1}^{m} [\beta_{i}]\otimes[\gamma_{i}],\;
[\beta_{i}]\in H^{2}(T^{2}_{\mathrm{BZ}}),\; [\gamma_{i}]\in H^{1}(S^{1}_{\phi_{+}})\oplus H^{1}(S^{1}_{\phi_{-}}) \Bigr\}.
\]

For the class $[\omega]$ coming from the physical torsion, the two-forms $\alpha_{+},\alpha_{-}$ in \eqref{eq:KunnethDecomp} are precisely the (cohomology classes of the) Berry curvature two-forms $F^{(+)},F^{(-)}$ of the two independent $U(1)$ sectors.
A key observation is that $\dim H^{2}(T^{2}_{\mathrm{BZ}}) = 1$. Therefore, any two non-zero classes in this space are necessarily cohomologically proportional. In a generic two-component SOC BEC, both Berry curvature classes $[F^{(+)}]$ and $[F^{(-)}]$ are non-zero; consequently, there exists a non-zero constant $\lambda\in\mathbb{R}$ such that
\[
[F^{(-)}] = \lambda [F^{(+)}] \quad \text{in } H^{2}(T^{2}_{\mathrm{BZ}}).
\]
Substituting this relation into \eqref{eq:KunnethDecomp} gives
\[
[\omega] = [F^{(+)}]\otimes[d\phi_{+}] + \lambda [F^{(+)}]\otimes[d\phi_{-}]
        = [F^{(+)}] \otimes \bigl([d\phi_{+}] + \lambda [d\phi_{-}]\bigr),
\]
which is a \emph{single} simple tensor. Hence the mixed tensor rank attains the value
\[
r = 1.
\]

\begin{remark}
Because $\dim H^{2}(T^{2}_{\mathrm{BZ}})=1$, the maximal possible mixed tensor rank for a class of the form $[\alpha_{+}]\otimes[d\phi_{+}] + [\alpha_{-}]\otimes[d\phi_{-}]$ with both $[\alpha_{\pm}]\neq 0$ is $1$. All experimentally prominent SOC geometries (Rashba--Dresselhaus, Hofstadter, Raman-induced couplings) satisfy the condition $[F^{(+)}]\neq 0$ and $[F^{(-)}]\neq 0$, thereby realizing the generic case $r=1$.
\end{remark}

\begin{remark}
Since \(M_2 = S^1_{\phi_+}\times S^1_{\phi_-}\) is a product, its first cohomology decomposes as
\(H^1(M_2) \cong H^1(S^1_+)\oplus H^1(S^1_-)\).  
Therefore an element of \(H^2(M_1)\otimes H^1(M_2)\) can be written as a sum of terms of the form
\([\alpha]\otimes[d\phi_+] + [\beta]\otimes[d\phi_-]\).  
The mixed tensor rank \(r\) is then the minimal number of simple tensors needed to represent this
sum in the space \(H^2(M_1)\otimes\bigl(H^1(S^1_+)\oplus H^1(S^1_-)\bigr)\).
\end{remark}

\subsection{Vanishing of the obstruction kernel $\mathcal{K}$}
\label{subsec:kernel}

The general bound of \cite{PigazziniToda2025} involves a correction term $\dim\mathcal{K}$, where
\[
\mathcal{K}= \ker\Psi_{p}\oplus\ker\widetilde{\Psi}_{p},
\qquad
\ker\Psi_{p}= \pi_{2,1}([\omega])\cap\bigl(\mathcal{P}_{2}(T^{2}_{\mathrm{BZ}})\otimes\mathcal{P}_{1}^{\mathrm{vert}}(M,g_{M})\bigr),
\]
and $\ker\widetilde{\Psi}_{p}$ is defined analogously for the $(1,2)$-component (which vanishes in our setting since the torsion has pure bidegree $(2,1)$).

In our geometry, $\mathcal{P}_{2}(T^{2}_{\mathrm{BZ}})=\mathbb{R}\,\mathrm{vol}_{\mathrm{BZ}}$. The mixed class is represented by the simple tensor $\xi_{0}=[\mathrm{vol}_{\mathrm{BZ}}]\otimes(a_{+}[d\phi_{+}]+a_{-}[d\phi_{-}])$ with $a_{+}=c^{(+)}$, $a_{-}=c^{(-)}$, and $a_{-}=\lambda a_{+}$ where $\lambda=c^{(-)}/c^{(+)}$.

By Lemma \ref{lem:dphi-not-parallel}, the space $\mathcal{P}_{1}^{\mathrm{vert}}(M,g_{M})$ consists of forms $\eta=c_{+}d\phi_{+}+c_{-}d\phi_{-}$ satisfying $c_{+}c^{(+)}+c_{-}c^{(-)}=0$, i.e.\ $c_{+}+\lambda c_{-}=0$. For $\xi_{0}$ to belong to $\mathcal{P}_{2}\otimes\mathcal{P}_{1}^{\mathrm{vert}}$, the 1-form $\beta=a_{+}d\phi_{+}+a_{-}d\phi_{-}$ must satisfy the parallel condition. Substituting $a_{-}=\lambda a_{+}$:
\[
a_{+}+\lambda a_{-} = a_{+}+\lambda(\lambda a_{+}) = a_{+}(1+\lambda^{2}).
\]
Since $\lambda\in\mathbb{R}\setminus\{0\}$ (both $c^{(\pm)}\neq 0$), we have $1+\lambda^{2}>0$, so $a_{+}(1+\lambda^{2})=0$ forces $a_{+}=0$, contradicting $[\omega]\neq 0$. The intersection is therefore trivial:
\begin{equation}\label{eq:Kzero}
\mathcal{K}=0,\qquad\text{and therefore}\qquad \dim\mathcal{K}=0 .
\end{equation}

\begin{remark}[Geometric origin of the vanishing]
The vanishing of $\mathcal{K}$ reflects an algebraic incompatibility: the parallel condition ($c_{+}+\lambda c_{-}=0$) and the cohomological constraint ($a_{-}=\lambda a_{+}$) define two lines in the coefficient plane $(c_{+},c_{-})$ with slopes $-1/\lambda$ and $\lambda$ respectively. Since $\lambda\cdot(-1/\lambda)=-1\neq 1$, these lines are never coincident, ensuring that the mixed class cannot be absorbed by the parallel-form stratum.
\end{remark}

\subsection{Summary of the topological data}

Based on \eqref{eq:Kzero} we obtain the reduced rank
\[
r^{\sharp} \;:=\; r - \dim\mathcal{K} \;=\; 1 .
\]

All ingredients of the cohomological framework are now in place:
the extended parameter space $(M,g_{M})$ carries a metric connection $\nabla^{C}$ with totally skew torsion whose harmonic part represents a mixed cohomology class $[\omega]$ of mixed tensor rank $r=1$, and the obstruction kernel $\mathcal{K}$ vanishes identically, yielding $r^{\sharp}=1$.
In the next section, we establish the lower bound on the off-diagonal holonomy through exact curvature analysis of the specific SOC BEC model. \textit{We do not apply the PT theorem of \cite{PigazziniToda2025} as a black box}; instead, we adapt its framework to the Kaluza--Klein geometry by means of direct pointwise computation.

\section{Exact Curvature Analysis and the Lower Bound}
\label{sec:KK-foundations}

The PT lower bound of \cite{PigazziniToda2025} is formally stated for Riemannian product metrics; the physical Kaluza--Klein metric $g_M$ is \emph{not} a product. To bridge this gap, we introduce a deformation family $\{g_\varepsilon\}_{\varepsilon\in[0,1]}$ interpolating between a product metric ($\varepsilon=0$) and the physical metric ($\varepsilon=1$), and we derive the exact curvature formula for the SOC BEC model under the assumption of constant Berry curvatures. This analysis reveals the Bismut cancellation at $\varepsilon=1$ and establishes the lower bound through three complementary mechanisms.

\subsection{Constant Berry curvature assumption}
\label{subsec:constant-curvature}

To perform fully explicit computations, we work under the following assumption.

\begin{definition}[Constant Berry curvatures]
\label{def:constant-curvature}
The Berry curvature 2-forms are constant multiples of the Brillouin-zone volume form:
\[
F^{(\pm)} = c^{(\pm)}\,\mathrm{vol}_{\mathrm{BZ}},\qquad c^{(\pm)}\in\mathbb{R}\setminus\{0\},
\]
where $\mathrm{vol}_{\mathrm{BZ}}=dk_x\wedge dk_y$ and $c^{(\pm)}$ are related to the Chern numbers by $c_1^{(\pm)}=\frac{\mathrm{Area}(T^2_{\mathrm{BZ}})}{2\pi}\,c^{(\pm)}$.
\end{definition}

This condition is satisfied in numerous SOC BEC realisations, including linear Rashba--Dresselhaus couplings and uniform synthetic magnetic fields.
We set $\lambda:=c^{(-)}/c^{(+)}$.

\subsection{Deformation family and adapted frame}
\label{subsec:deformation}

\begin{definition}[Deformation family]
\label{def:deformation-family}
For $\varepsilon\in[0,1]$, define the metric on $M$ by
\[
g_\varepsilon = \pi^*g_{\mathrm{BZ}} + (d\phi_+ + \varepsilon\,\pi^*A^{(+)})^2 + (d\phi_- + \varepsilon\,\pi^*A^{(-)})^2.
\]
At $\varepsilon=0$ this is the product metric $g_0=g_{\mathrm{BZ}}\oplus d\phi_+^2\oplus d\phi_-^2$; at $\varepsilon=1$ it recovers the physical Kaluza--Klein metric $g_M$.
\end{definition}

For each $\varepsilon$, an orthonormal coframe for $g_\varepsilon$ is
\[
e^1=dk_x,\quad e^2=dk_y,\quad e^3=d\phi_+ + \varepsilon\,A^{(+)},\quad e^4=d\phi_- + \varepsilon\,A^{(-)}.
\]
The dual orthonormal frame is $\{e_1,e_2\}\subset\mathcal{H}_\varepsilon$ (horizontal) and $e_3=\partial_{\phi_+}$, $e_4=\partial_{\phi_-}$ (vertical). Under Definition \ref{def:constant-curvature}, the unique non-vanishing frame commutator is
\begin{equation}\label{eq:bracket}
[e_1,e_2] = -\varepsilon\,c^{(+)}\,e_3 - \varepsilon\,c^{(-)}\,e_4;
\end{equation}
all other brackets vanish: $[e_i,e_\alpha]=[e_3,e_4]=0$.

\subsection{Levi-Civita connection of $g_\varepsilon$}
\label{subsec:LC-connection}

From the Koszul formula $2\,g(\nabla_{e_a}e_b,e_c) = g([e_a,e_b],e_c)+g([e_c,e_a],e_b)-g([e_b,e_c],e_a)$ and the bracket \eqref{eq:bracket}, the complete list of non-zero covariant derivatives is:
\begin{alignat}{2}
\nabla_{e_1}e_2 &= -\tfrac{\varepsilon\,c^{(+)}}{2}\,e_3 - \tfrac{\varepsilon\,c^{(-)}}{2}\,e_4,
  &\qquad
\nabla_{e_2}e_1 &= \tfrac{\varepsilon\,c^{(+)}}{2}\,e_3 + \tfrac{\varepsilon\,c^{(-)}}{2}\,e_4,
  \label{eq:nabLC-12}\\[3pt]
\nabla_{e_1}e_3 &= \tfrac{\varepsilon\,c^{(+)}}{2}\,e_2,
  &
\nabla_{e_2}e_3 &= -\tfrac{\varepsilon\,c^{(+)}}{2}\,e_1,
  \label{eq:nabLC-i3}\\[3pt]
\nabla_{e_1}e_4 &= \tfrac{\varepsilon\,c^{(-)}}{2}\,e_2,
  &
\nabla_{e_2}e_4 &= -\tfrac{\varepsilon\,c^{(-)}}{2}\,e_1,
  \label{eq:nabLC-i4}\\[3pt]
\nabla_{e_3}e_1 &= \tfrac{\varepsilon\,c^{(+)}}{2}\,e_2,
  &
\nabla_{e_3}e_2 &= -\tfrac{\varepsilon\,c^{(+)}}{2}\,e_1,
  \label{eq:nabLC-3i}\\[3pt]
\nabla_{e_4}e_1 &= \tfrac{\varepsilon\,c^{(-)}}{2}\,e_2,
  &
\nabla_{e_4}e_2 &= -\tfrac{\varepsilon\,c^{(-)}}{2}\,e_1.
  \label{eq:nabLC-4i}
\end{alignat}
All remaining covariant derivatives vanish. In particular, $\nabla_{e_\alpha}e_\beta=0$ for $\alpha,\beta\in\{3,4\}$, confirming that the fibres are totally geodesic. For $\varepsilon=0$, all covariant derivatives vanish, recovering the flat product connection.

\subsection{Torsion endomorphisms and the connection with torsion}
\label{subsec:torsion-endo}

Under Definition \ref{def:constant-curvature}, the torsion $T_\varepsilon$ at any point $p\in M$ in the adapted frame reduces to
\[
T_\varepsilon|_p = c^{(+)}\,e^1\wedge e^2\wedge e^3 + c^{(-)}\,e^1\wedge e^2\wedge e^4,
\]
which has constant coefficients (independent of the point $p$ by homogeneity). The torsion endomorphisms $T_{e_a}(e_b):=T(e_a,e_b,\cdot)^\sharp$ are determined by direct computation: the non-zero values are
\begin{equation}\label{eq:torsion-table}
\begin{aligned}
T_{e_1}(e_2) &= c^{(+)}e_3+c^{(-)}e_4, &\quad T_{e_3}(e_1) &= c^{(+)}e_2, &\quad T_{e_4}(e_1) &= c^{(-)}e_2,\\
T_{e_2}(e_1) &= -(c^{(+)}e_3+c^{(-)}e_4), &\quad T_{e_3}(e_2) &= -c^{(+)}e_1, &\quad T_{e_4}(e_2) &= -c^{(-)}e_1,\\
T_{e_1}(e_3) &= -c^{(+)}e_2, &\quad T_{e_1}(e_4) &= -c^{(-)}e_2, &\quad T_{e_2}(e_3) &= c^{(+)}e_1,\\
T_{e_2}(e_4) &= c^{(-)}e_1.
\end{aligned}
\end{equation}
Combining with the Levi-Civita derivatives \eqref{eq:nabLC-12}--\eqref{eq:nabLC-4i}, the connection with torsion $\nabla^{C_\varepsilon}=\nabla^{LC_\varepsilon}+\frac{1}{2}T_\varepsilon$ satisfies, for horizontal $e_i$ and vertical $e_\alpha$:
\begin{equation}\label{eq:nabC-key}
\nabla^{C_\varepsilon}_{e_1}e_3 = \tfrac{(\varepsilon-1)\,c^{(+)}}{2}\,e_2,\qquad
\nabla^{C_\varepsilon}_{e_1}e_4 = \tfrac{(\varepsilon-1)\,c^{(-)}}{2}\,e_2.
\end{equation}

\begin{remark}[Bismut cancellation]
\label{rem:Bismut}
At $\varepsilon=1$, the factor $(\varepsilon-1)$ in \eqref{eq:nabC-key} vanishes: $\nabla^{C_1}_{e_i}e_\alpha=0$ for all $i\in\{1,2\}$ and $\alpha\in\{3,4\}$. Parallel transport along horizontal paths preserves the splitting $\mathcal{H}\oplus\mathcal{V}$. This is the \emph{Bismut cancellation}: the natural KK torsion is the unique totally skew-symmetric torsion for which the horizontal distribution is parallel along horizontal directions. The Levi-Civita and contorsion contributions cancel exactly.
\end{remark}

\subsection{Exact curvature formula}
\label{subsec:exact-curvature}

We now compute the full curvature $R^{C_\varepsilon}(e_1,e_3)$ by evaluating the three terms in the curvature decomposition
\[
R^{C_\varepsilon}(X,Y) = R^{LC_\varepsilon}(X,Y) + \tfrac{1}{2}\bigl[(\nabla^{LC_\varepsilon}_X T_\varepsilon)(Y,\cdot)-(\nabla^{LC_\varepsilon}_Y T_\varepsilon)(X,\cdot)\bigr]^\sharp + \tfrac{1}{4}[T_X,T_Y]
\]
for the mixed inputs $X=e_1\in\mathcal{H}$, $Y=e_3\in\mathcal{V}$.

\textit{Term I: Levi-Civita curvature.} Using $R^{LC}(e_1,e_3)e_3 = \nabla_{e_1}\nabla_{e_3}e_3 - \nabla_{e_3}\nabla_{e_1}e_3$ (since $[e_1,e_3]=0$ and $\nabla_{e_3}e_3=0$):
\[
\nabla_{e_3}\bigl(\tfrac{\varepsilon c^{(+)}}{2}e_2\bigr) = \tfrac{\varepsilon c^{(+)}}{2}\bigl(-\tfrac{\varepsilon c^{(+)}}{2}e_1\bigr) = -\tfrac{\varepsilon^2(c^{(+)})^2}{4}e_1,
\]
hence
\begin{equation}\label{eq:RLC-mixed}
R^{LC_\varepsilon}(e_1,e_3)\,e_3 = \frac{\varepsilon^2\,(c^{(+)})^2}{4}\,e_1.
\end{equation}
This is purely off-diagonal ($\mathcal{V}\to\mathcal{H}$).

\textit{Term II: Torsion derivative.} By antisymmetry $T_\varepsilon(e_3,e_3,\cdot)=0$, so the first summand $(\nabla^{LC_\varepsilon}_{e_1}T_\varepsilon)(e_3,e_3,\cdot)$ vanishes identically. In the second summand $(\nabla^{LC_\varepsilon}_{e_3}T_\varepsilon)(e_1,e_3,\cdot)$ the Christoffel corrections involve only $\nabla^{LC_\varepsilon}_{e_3}e_1$ and $\nabla^{LC_\varepsilon}_{e_3}e_2$ \eqref{eq:nabLC-3i}, and a direct evaluation shows the resulting endomorphism is block-diagonal ($\mathcal{H}\to\mathcal{H}$ and $\mathcal{V}\to\mathcal{V}$); its off-diagonal projection therefore vanishes for all $\varepsilon$. As an independent check, computing the curvature directly from the connection with torsion, $R^{C_\varepsilon}(e_1,e_3)e_3=\nabla^{C_\varepsilon}_{e_1}\nabla^{C_\varepsilon}_{e_3}e_3-\nabla^{C_\varepsilon}_{e_3}\nabla^{C_\varepsilon}_{e_1}e_3$, reproduces \eqref{eq:exact-curvature}, which equals the sum of Term~I and Term~III alone; this confirms that the off-diagonal torsion-derivative contribution is zero.

\textit{Term III: Quadratic torsion.} From the table \eqref{eq:torsion-table}: $T_{e_3}(e_3)=0$ and $T_{e_1}(e_3)=-c^{(+)}e_2$. Then $T_{e_3}(-c^{(+)}e_2)=(c^{(+)})^2 e_1$, giving
\begin{equation}\label{eq:QT}
\tfrac{1}{4}[T_{e_1},T_{e_3}](e_3) = -\frac{(c^{(+)})^2}{4}\,e_1.
\end{equation}

\textit{Combining the three terms:}
\begin{equation}\label{eq:exact-curvature}
R^{C_\varepsilon}(e_1,e_3)\,e_3 = \frac{(c^{(+)})^2\,(\varepsilon^2-1)}{4}\,e_1.
\end{equation}
By homogeneity of the model (constant coefficients), this formula holds at \emph{every} point $p\in M$.

\begin{theorem}[Off-diagonal holonomy for the deformation family]
\label{thm:deformation-bound}
For all $c^{(\pm)}\neq 0$ and $\varepsilon\in(0,1)$, the factor $(\varepsilon^2-1)\neq 0$ in \eqref{eq:exact-curvature}, and $R^{C_\varepsilon}(e_1,e_3)$ is a non-zero, purely off-diagonal element of $\mathfrak{so}(T_pM)$. By the Ambrose--Singer theorem, $R^{C_\varepsilon}(e_1,e_3)\in\mathfrak{hol}_p(\nabla^{C_\varepsilon})$, yielding
\begin{equation}\label{eq:bound-deformation}
\dim\mathfrak{hol}^{\mathrm{off}}_p(\nabla^{C_\varepsilon})\geq 1\qquad\text{at every }p\in M,\quad\text{for all }\varepsilon\in(0,1).
\end{equation}
\end{theorem}

\subsection{The physical metric: non-Bismut torsion and Riemannian holonomy}
\label{subsec:physical-metric}

At $\varepsilon=1$, the exact formula \eqref{eq:exact-curvature} gives $R^{C_1}(e_1,e_3)=0$: the Bismut cancellation (Remark \ref{rem:Bismut}). We establish the lower bound for the physical metric through two complementary mechanisms.

\begin{theorem}[Lower bound at the physical metric]
\label{thm:physical-bound}
Let $(M,g_1)$ be the physical Kaluza--Klein manifold with $c^{(\pm)}\neq 0$. Then:
\begin{enumerate}
\item[\emph{(i)}] \textit{Non-Bismut torsion.} For any cohomologically calibrated connection $\nabla^{C'}=\nabla^{LC_1}+\frac{1}{2}T'$ whose torsion $T'$ has pure bidegree $(2,1)$, with $[T']=[\omega]$ and $T'\neq T_{\mathrm{Bis}}$, there exists a non-empty open subset $U\subset M$ on which $\dim\mathfrak{hol}^{\mathrm{off}}_p(\nabla^{C'})\geq 1$.
\item[\emph{(ii)}] \textit{Riemannian holonomy.} The Levi-Civita connection of $g_1$ satisfies
\begin{equation}\label{eq:RLC-physical}
R^{LC_1}(e_1,e_3)\,e_3 = \frac{(c^{(+)})^2}{4}\,e_1\neq 0,
\end{equation}
which is purely off-diagonal. Hence $\dim\mathfrak{hol}^{\mathrm{off}}_p(\nabla^{LC_1})\geq 1$ at every $p\in M$.
\item[\emph{(iii)}] \textit{Bismut uniqueness.} The Bismut torsion $T_{\mathrm{Bis}}=F^{(+)}\wedge\Theta^{(+)}+F^{(-)}\wedge\Theta^{(-)}$ is the unique torsion representative of $[\omega]$ of pure bidegree $(2,1)$ for which $\mathfrak{hol}^{\mathrm{off}}_p(\nabla^C)=\{0\}$ at every point.
\end{enumerate}
\end{theorem}

\begin{proof}
\textit{(i)} Write $T'=(c^{(+)}+\psi)\,\mathrm{vol}_{\mathrm{BZ}}\wedge d\phi_+ + c^{(-)}\,\mathrm{vol}_{\mathrm{BZ}}\wedge d\phi_- + \eta$, where $\psi:M\to\mathbb{R}$ captures the deviation from $T_{\mathrm{Bis}}$ and $[\psi\,\mathrm{vol}_{\mathrm{BZ}}\wedge d\phi_+]=0$ in cohomology. Then $\nabla^{C'}_{e_1}e_3 = -\frac{\psi}{2}e_2$ (replacing $c^{(+)}$ by $c^{(+)}+\psi$ in \eqref{eq:nabC-key} at $\varepsilon=1$). Computing the curvature:
\[
R^{C'}(e_1,e_3)\,e_3 = -\frac{\psi(2c^{(+)}+\psi)}{4}\,e_1,
\]
which is purely off-diagonal and non-zero on the open set $U:=\{p\in M:\psi(p)\neq 0\ \text{and}\ 2c^{(+)}+\psi(p)\neq 0\}$. For $T'\neq T_{\mathrm{Bis}}$ one has $\psi\not\equiv 0$; moreover $[\psi\,\mathrm{vol}_{\mathrm{BZ}}\wedge d\phi_+]=0$ forces $\int_{T^2_{\mathrm{BZ}}}\psi\,\mathrm{vol}_{\mathrm{BZ}}=0$, so the continuous function $\psi$ changes sign and takes values arbitrarily close to $0$; hence $U$ is non-empty.

\textit{(ii)} Setting $\varepsilon=1$ in \eqref{eq:RLC-mixed} yields \eqref{eq:RLC-physical}. This is purely off-diagonal and constant on $M$.

\textit{(iii)} Among pure-bidegree $(2,1)$ representatives, $\nabla^{C'}_{e_1}e_\alpha=0$ for all $\alpha$ requires $\psi\equiv 0$ and analogously $\chi\equiv 0$, i.e.\ $T'=T_{\mathrm{Bis}}$.
\end{proof}

\begin{remark}[Robustness of the bound]
\label{rem:robustness}
The Bismut cancellation is a non-generic phenomenon: it occurs for a single torsion representative within the infinite-dimensional affine space of representatives of $[\omega]$. Physical perturbations (interactions, disorder, spatial inhomogeneities in the Raman coupling) generically break this exact balance, placing the system in the non-Bismut regime of Theorem \ref{thm:physical-bound}(i).
\end{remark}

\section{Main Theorem and Its Physical Consequences}
\label{sec:bound}

We now synthesize the cohomological data of Section \ref{sec:cohomology} and the curvature analysis of Section \ref{sec:KK-foundations} into the main result.

\subsection{Main theorem: topological lower bound for SOC BECs}\label{subsec:main-theorem}

\begin{theorem}[Topological lower bound for SOC BECs]\label{thm:main}
Let $M = T^{2}_{\mathrm{BZ}}\times S^{1}_{\phi_{+}}\times S^{1}_{\phi_{-}}$ be the extended parameter space equipped with the Kaluza--Klein metric $g_M$ of \eqref{eq:KKmetric}. Under Definition \ref{def:constant-curvature} (constant Berry curvatures with $c^{(\pm)}\neq 0$):
\begin{enumerate}
\item The mixed tensor rank of the torsion class is $r=1$ and the obstruction kernel vanishes: $\mathcal{K}=\{0\}$, $r^{\sharp}=1$.
\item[\emph{(i)}] \textit{(Deformation family.)} For every $\varepsilon\in(0,1)$,
$\dim\mathfrak{hol}^{\mathrm{off}}(\nabla^{C_\varepsilon})\geq 1$ at every $p\in M$.
\item[\emph{(ii)}] \textit{(Physical metric, non-Bismut torsion.)} For any $\nabla^{C'}$ whose torsion $T'$ has pure bidegree $(2,1)$, with $[T']=[\omega]$ and $T'\neq T_{\mathrm{Bis}}$,
$\dim\mathfrak{hol}^{\mathrm{off}}(\nabla^{C'})\geq 1$ on an open non-empty subset of $M$.
\item[\emph{(iii)}] \textit{(Riemannian holonomy.)}
$\dim\mathfrak{hol}^{\mathrm{off}}(\nabla^{LC_1})\geq 1$ at every $p\in M$.
\end{enumerate}
In all cases, the cohomological invariant satisfies $r^{\sharp}=1$ for the physical metric.
\end{theorem}

\begin{proof}
Item 1 follows from Section \ref{subsec:tensor-rank} (mixed rank $r=1$) and the algebraic argument of Section \ref{subsec:kernel} ($\mathcal{K}=0$).
Items (i)--(iii) follow from Theorems \ref{thm:deformation-bound} and \ref{thm:physical-bound}.
\end{proof}

\subsection{Physical interpretation: non-removable off-diagonal curvature}
\label{subsec:interpretation}

The three-level structure reveals that the $\mathcal{H}\oplus\mathcal{V}$ splitting is not invariant under the Riemannian holonomy of the physical Kaluza--Klein metric, and this non-invariance persists at the torsion-connection level for every non-Bismut representative of $[\omega]$.
The bound $\dim\mathfrak{hol}^{\mathrm{off}}\ge1$ guarantees the existence of at least one linearly independent curvature operator that mixes momentum with the phase directions.

In physical terms, one cannot ``flatten'' the Berry curvature along both phase directions simultaneously for all points in the Brillouin zone.
The topology of the mixed class $[\omega]$ forces a minimal geometric entanglement between the momentum and phase degrees of freedom. The Bismut cancellation---the exact compensation of the Levi-Civita and contorsion terms for the natural KK torsion---is a non-generic phenomenon of infinite codimension, broken by any physical perturbation that modifies the torsion representative within the same cohomology class.

The refined invariant $r^{\sharp}$ captures an operationally distinct feature from the mixed rank $r$. While $r=1$ guarantees off-diagonal holonomy on any product metric, it does not distinguish whether the obstruction can be circumvented by freezing a specific linear combination of the two phases. In the product-metric limit ($r^{\sharp}=0$), the harmonic representative $\beta = c^{(+)}d\phi_{+}+c^{(-)}d\phi_{-}$ belongs to the parallel-form stratum $\mathcal{P}_{1}^{\mathrm{vert}}$, and dimensional reduction along the corresponding circle $S^{1}_{\beta}\subset S^{1}_{\phi_{+}}\times S^{1}_{\phi_{-}}$---physically, locking the phase combination $c^{(+)}\phi_{+}+c^{(-)}\phi_{-}$---would annihilate the torsion on the reduced space, eliminating the topological obstruction. For the Kaluza--Klein metric ($r^{\sharp}=1$), the algebraic incompatibility between the parallel condition and the cohomological constraint (Section \ref{subsec:kernel}) ensures that no such phase-locking protocol can eliminate the obstruction. The invariant $r^{\sharp}$ thus detects the robustness of the topological constraint under phase-reduction protocols---a distinction invisible to $r$ alone.

\subsection{Persistence of the bound when the total Chern number vanishes}
\label{subsec:beyond-chern}

A crucial feature is that the bound is insensitive to the value of the \emph{total} first Chern number
$c_{1}^{\,\mathrm{tot}} = \frac{1}{2\pi}\int_{T^{2}_{\mathrm{BZ}}} (F^{(+)}+F^{(-)})$.

\begin{corollary}[Bound independent of the total Chern number]
\label{cor:beyond-chern}
If $c_{1}^{(+)}\neq 0$ and $c_{1}^{(-)} = -c_{1}^{(+)}$, then $c_{1}^{\,\mathrm{tot}}=0$, but all three parts of Theorem \ref{thm:main} remain valid. In particular, the Riemannian off-diagonal curvature \eqref{eq:RLC-physical} depends on the individual constants $c^{(\pm)}$ and is insensitive to cancellations in their sum.
\end{corollary}

\begin{proof}
Both $[F^{(\pm)}]\neq 0$ implies $r=1$. The vanishing of $\mathcal{K}$ depends only on the algebraic incompatibility $1+\lambda^2>0$, not on the values of the Chern numbers.
\end{proof}

\subsection{Invariance under metric deformations}

The invariant $r^{\sharp}$ depends only on the topological data $r$ and the parallel-form strata $\mathcal{P}_k(M_i)$.
Any smooth deformation of $g_1$ that preserves the non-vanishing of $F^{(\pm)}$ and the algebraic incompatibility between the parallel condition and the cohomological structure leaves $r^{\sharp}=1$ unchanged. The Riemannian off-diagonal curvature \eqref{eq:RLC-physical} is robust against any metric deformation within the Kaluza--Klein class.

\section{Physical Consequences and Experimental Signatures}
\label{sec:consequences}

The three-level non-reducibility structure of Theorem \ref{thm:main} imposes fundamental structural constraints on the synthetic gauge architecture of SOC BECs. These manifest as irreducible couplings between the external (momentum) and internal (phase) degrees of freedom: the Riemannian holonomy of $g_M$ satisfies $\dim\mathfrak{hol}^{\mathrm{off}}(\nabla^{LC})\geq 1$ at every point, and the torsion-connection holonomy satisfies the same bound for every non-Bismut torsion representative on an open subset.

\subsection{Geometric obstruction to gauge flattening}

The bound ensures the existence of at least one independent off-diagonal curvature operator that non-trivially mixes the momentum tangent space \(T_{k}T^{2}_{\mathrm{BZ}}\) with the phase fibre directions \(\mathbb{R}\langle\partial_{\phi_{+}},\partial_{\phi_{-}}\rangle\). In terms of the synthetic gauge field components, this implies that no smooth gauge transformation or continuous deformation of the condensate order parameter can simultaneously ``flatten'' the curvature components:
\[
\Omega_{j, \pm} := \frac{\partial A_{\phi_{\pm}}}{\partial k_{j}} - \frac{\partial A_{k_{j}}}{\partial \phi_{\pm}}.
\]

The topology of the mixed cohomology class \([\omega]\) enforces a \emph{minimal geometric coupling} between the Brillouin zone dynamics and the dual-phase evolution. Unlike trivial gauge fields, where the connection can be locally reduced to a longitudinal form, the non-vanishing rank \(r^{\sharp}\) certifies that at least one independent transverse coupling is irreducible. This ``topological locking'' prevents the independent manipulation of the momentum and the relative phases \(\phi_{\pm}\) in any experimental protocol that preserves the SOC texture.

\subsection{Locality and robustness beyond the Chern-number paradigm}

The primary physical advantage of the holonomy bound lies in its sensitivity to the local structure of the Berry curvature, whereas the total Chern number \(c_{1}^{\mathrm{tot}}\) is a global aggregate that may vanish due to cancellation. 

In configurations where the individual fluxes satisfy \([F^{(+)}] \approx -[F^{(-)}]\), the global invariant \(c_{1}^{\mathrm{tot}}\) vanishes, potentially suggesting a topologically trivial regime. However, the inequality \(\dim\mathfrak{hol}^{\mathrm{off}}\ge1\) persists as long as the local field strength remains non-zero. This identifies \(r^{\sharp}\) as a \emph{robust local invariant}: it detects the persistence of topological effects, such as the non-integrability of the Berry connection, even in compensated systems or ``synthetic vacuum'' configurations where net topological charges are absent. 

From an experimental perspective, this implies that effects typically associated with non-trivial topology---such as non-Abelian transport characteristics or anomalous velocity contributions---can be observed in SOC BECs even when the total Chern number is zero, provided the mixed tensor rank \(r\) is preserved.

\section{Illustrative Examples: From Chern Numbers to the Mixed Rank}
\label{sec:examples}

The lower bound $\dim\mathfrak{hol}^{\mathrm{off}}(\nabla^{LC})\ge 1$ established in Theorem \ref{thm:main} provides a universal constraint for generic two-component SOC BECs. In this section, we illustrate the operational significance of this bound in two paradigmatic scenarios: first, the standard Rashba--Dresselhaus texture, where our framework recovers the phenomenology of Chern-number-induced obstructions; second, a configuration with vanishing net Chern flux, where the mixed-rank invariant $r^{\sharp}$ reveals topological features inaccessible to traditional global invariants.

\subsection{Example 1: Rashba--Dresselhaus texture and the maximal rank \(r=1\)}
\label{subsec:RD-example}

Consider a two-dimensional, two-level Hamiltonian of the Rashba--Dresselhaus (RD) type:
\[
\hat{H}_{\mathrm{RD}}(k) \;=\; \frac{1}{2m}\bigl(\mathbf{p}-\mathbf{A}_{\mathrm{syn}}(k)\bigr)^{2} 
\;+\; \mathbf{B}_{\mathrm{syn}}(k)\cdot\boldsymbol{\sigma},
\qquad k=(k_x,k_y)\in T^{2}_{\mathrm{BZ}},
\]
where $\mathbf{A}_{\mathrm{syn}}(k)$ and $\mathbf{B}_{\mathrm{syn}}(k)$ represent the synthetic vector potential and Zeeman field, respectively. For generic coupling parameters $\kappa$ and $\Omega$, the lower dressed band induces a spin texture map $\mathbf{n}: T^{2}_{\mathrm{BZ}} \to S^{2}$.

The associated eigenline bundle $\mathbb{L} \to T^{2}_{\mathrm{BZ}}$ is the pullback of the tautological line bundle over $\mathbb{C}P^{1} \cong S^{2}$ via $\mathbf{n}$. The first Chern number $c_1(\mathbb{L})$ corresponds to the degree of the map $\mathbf{n}$. For a minimal RD texture with $d=1$, the Berry curvature $F$ represents a non-trivial cohomology class $[F] \in H^{2}(T^{2}_{\mathrm{BZ}}; \mathbb{Z})$, such that $\frac{1}{2\pi}\int F = \pm 1$.

In the extended parameter space $M = T^{2}_{\mathrm{BZ}} \times S^{1}_{\phi_{+}} \times S^{1}_{\phi_{-}}$, the synthetic torsion class $[\omega]$ is given by:
\[
[\omega] = [F]\otimes[d\phi_{+}] \;+\; [F]\otimes[d\phi_{-}] \in H^3(M; \mathbb{R}).
\]
Since $\dim H^{2}(T^{2}_{\mathrm{BZ}})=1$, the mixed components are linearly dependent over the base, allowing the class to be expressed as the simple tensor:
\[
[\omega] = [F] \otimes \bigl([d\phi_{+}] + [d\phi_{-}]\bigr).
\]
Consequently, the mixed tensor rank is $r = 1$. Given that the obstruction kernel $\mathcal{K}$ vanishes in this geometry, we obtain $r^{\sharp}=1$. Theorem \ref{thm:main} then yields $\dim\mathfrak{hol}^{\mathrm{off}}(\nabla^{LC})\ge 1$, and $\dim\mathfrak{hol}^{\mathrm{off}}(\nabla^{C'})\ge 1$ on an open set for every non-Bismut torsion representative.
We note that for a global Rashba texture with non-zero Chern number, the bundle is topologically non-trivial, preventing a global product decomposition. However, adapting the framework within a local trivialization demonstrates that the mixed rank $r^\sharp$ correctly identifies the `locking' of momentum and phase degrees of freedom. The Riemannian off-diagonal curvature $R^{LC}(e_1,e_3)e_3\neq 0$ provides a concrete, computable manifestation of this locking that is independent of the torsion representative.

\subsection{Example 2: Topological persistence under vanishing net Chern flux}
\label{subsec:zero-Chern-example}

The efficacy of the mixed-rank invariant is most evident when global topological charges vanish. Consider a configuration where the two \(U(1)\) sectors carry Berry curvatures \(F^{(+)}\) and \(F^{(-)}\) with equal and opposite integrated fluxes:
\[
\frac{1}{2\pi}\int_{T_{\mathrm{BZ}}^{2}}F^{(+)}=+1,\qquad
\frac{1}{2\pi}\int_{T_{\mathrm{BZ}}^{2}}F^{(-)}=-1.
\tag{$\ast$}
\]
In this case, the total Chern number \(c_{1}^{\mathrm{tot}}=c_{1}^{(+)}+c_{1}^{(-)}=0\). Traditional Chern-number diagnostics would categorize this regime as topologically trivial. Crucially, the vanishing of the total Chern flux ensures that the synthetic connection \(A\) can be defined as a global 1-form, making the product structure \(M=T_{\mathrm{BZ}}^{2}\times S^{1}\times S^{1}\) globally well-defined and consistent with our geometric framework. In this globally trivialized setting, the torsion class remains:
\[
[\omega]=[F^{(+)}]\otimes[d\phi_{+}] + [F^{(-)}]\otimes[d\phi_{-}].
\]
Since \([F^{(-)}]=-[F^{(+)}]\) in \(H^{2}(T_{\mathrm{BZ}}^{2})\), the class factorizes into a single simple tensor:
\[
[\omega]=[F^{(+)}]\otimes\bigl([d\phi_{+}]-[d\phi_{-}]\bigr).
\]
Thus, $r=1$ and $r^{\sharp}=1$. Theorem \ref{thm:main} yields $\dim\mathfrak{hol}^{\mathrm{off}}(\nabla^{LC})\geq 1$ at every point of $M$, and $\dim\mathfrak{hol}^{\mathrm{off}}(\nabla^{C'})\geq 1$ on an open set for every non-Bismut torsion representative. This proves that even in the absence of a net global flux, the Berry curvature cannot be flattened simultaneously along both phase directions. Such a configuration is protected by a local topological obstruction that is invisible to integrated invariants but perfectly captured by the mixed cohomology rank on the product manifold.

To illustrate how the bound materializes geometrically, consider the concrete choice
\[
A^{(+)}= \frac{1}{4\pi}(k_x dk_y - k_y dk_x),\qquad 
A^{(-)}=-A^{(+)},
\]
which yields constant Berry curvatures \(F^{(\pm)} = \pm\frac{1}{2\pi} dk_x\wedge dk_y\) and satisfies ($\ast$) on a torus of period \(2\pi\) in each direction. Write \(c:=c^{(+)}=1/(2\pi)\), so that \(c^{(-)}=-c\). The corresponding Kaluza--Klein metric is
\[
g_M = dk_x^2 + dk_y^2 
      + \Bigl(d\phi_+ + \tfrac{1}{4\pi}(k_x dk_y - k_y dk_x)\Bigr)^2 
      + \Bigl(d\phi_- - \tfrac{1}{4\pi}(k_x dk_y - k_y dk_x)\Bigr)^2.
\]

The torsion 3-form \(T=F^{(+)}\wedge\Theta^{(+)}+F^{(-)}\wedge\Theta^{(-)}\) reduces in adapted gauge (where \(A^{(\pm)}|_p=0\)) to \(T|_p = c\,\mathrm{vol}_{\mathrm{BZ}}\wedge(d\phi_+-d\phi_-)\). This is the natural Kaluza--Klein torsion, which coincides with the Bismut torsion of the principal bundle.

To verify the bound, we compute the \emph{Riemannian} off-diagonal curvature of \(g_M\). Working in the orthonormal frame \(\{e_1=\partial_{k_x}^{h},\, e_2=\partial_{k_y}^{h},\, e_3=\partial_{\phi_+},\, e_4=\partial_{\phi_-}\}\) (where \(\partial_{k_i}^{h}\) denotes the horizontal lift), the Koszul formula yields the non-zero Levi-Civita covariant derivatives
\[
\nabla^{LC}_{e_1}e_3 = \tfrac{c}{2}\,e_2,\qquad
\nabla^{LC}_{e_3}e_1 = \tfrac{c}{2}\,e_2,\qquad
\nabla^{LC}_{e_1}e_2 = -\tfrac{c}{2}\,e_3 + \tfrac{c}{2}\,e_4,
\]
and analogous expressions obtained by symmetry. A direct computation gives the \emph{mixed-input} Riemannian curvature
\[
R^{LC}(e_1,e_3)\,e_3 = \frac{c^2}{4}\,e_1 = \frac{1}{16\pi^2}\,e_1 \neq 0.
\]
This operator maps \(e_3\in\mathcal{V}\) to \(e_1\in\mathcal{H}\): it is \emph{purely off-diagonal}. By the Ambrose--Singer theorem, it belongs to \(\mathfrak{hol}_p(\nabla^{LC})\cap\mathfrak{so}^{\mathrm{off}}(T_pM)\), confirming
\[
\dim\mathfrak{hol}^{\mathrm{off}}(\nabla^{LC}) \geq 1 = r^\sharp.
\]

\begin{remark}
The explicit computation confirms that:
\begin{enumerate}
\item The Riemannian off-diagonal curvature is non‑vanishing even though \(c_{1}^{\mathrm{tot}}=0\).
\item The topological bound \(\dim\mathfrak{hol}^{\mathrm{off}}\geq 1\) is not merely existential; it is realized by concrete curvature operators with mixed (horizontal--vertical) inputs that can be written down in closed form.
\item The geometric obstruction manifests as a direct coupling between horizontal directions \((e_1,e_2)\) and vertical directions \((e_3,e_4)\), so that the splitting \(\mathcal{H}\oplus\mathcal{V}\) is not invariant under the Riemannian holonomy.
\item For the natural Kaluza--Klein torsion \(T_{\mathrm{Bis}}\) (the Bismut torsion), the connection \(\nabla^{C}=\nabla^{LC}+\frac{1}{2}T_{\mathrm{Bis}}\) satisfies \(R^{C}(e_i,e_\alpha)=0\) for all mixed inputs---the Levi-Civita and contorsion contributions cancel exactly. This Bismut cancellation is a non-generic phenomenon: every other torsion representative of \([\omega]\) produces non-trivial off-diagonal holonomy on an open subset.
\end{enumerate}
\end{remark}

\subsection{Comparison: Global Invariants vs. Mixed Rank}
\label{subsec:role-bound}

The distinction between $c_{1}^{\mathrm{tot}}$ and $r^{\sharp}$ reflects the difference between integrated and structural topology. The Chern number $c_{1}^{\mathrm{tot}}$ is a global aggregate that can vanish via local cancellations. In contrast, the mixed rank $r$ counts the number of irreducible tensorial couplings between the factor manifolds. 

In our geometry, $r=1$ is attained whenever at least one individual Berry curvature class is non-zero. It serves as a measure of the \emph{irreducible geometric coupling} between the Brillouin zone torus and the phase fibres. The bound $\dim\mathfrak{hol}^{\mathrm{off}}\ge r^{\sharp}$ therefore constrains the algebraic complexity of the curvature, ensuring that the synthetic gauge field remains "locked" in a non-trivial configuration even when the net topological charge is zero.

\subsection{Summary}

The Rashba--Dresselhaus case shows that $r^{\sharp}$ is consistent with established Chern-number results. However, the $c_{1}^{\mathrm{tot}}=0$ case demonstrates that the mixed-rank framework provides a finer classification, detecting locally non-removable curvature that conventional global invariants fail to resolve. This makes $r^{\sharp}$ a superior tool for analyzing the stability of synthetic gauge fields in complex SOC BEC architectures.

\section{Beyond the Chern Number: Local Rail-Tracks and Prospective Protocols} 
\label{sec:protocol}

While the Chern number $c_1$ quantifies the net vorticity through a global integral over the Brillouin zone, the lower bound on the off-diagonal holonomy dimension, $r^\sharp$, acts as a local structural constraint. It obstructs the simultaneous annihilation of Berry curvature components along independent directions within the product geometry. This phenomenon can be visualized as the Berry curvature being constrained to move along topological ``rail-tracks'' in the configuration space: even when the total topological charge is tuned to zero, specific directional components remain topologically locked and cannot be erased by any smooth gauge transformation.

In the two-component SOC BEC manifold $M = T^{2}_{\mathrm{BZ}}\times S^{1}_{\phi_{+}}\times S^{1}_{\phi_{-}}$, the invariant $r^\sharp = 1$ reflects the algebraic incompatibility between the parallel condition and the cohomological data. The $\mathcal{H}\oplus\mathcal{V}$ splitting is not invariant under the Riemannian holonomy of $g_M$, and this non-invariance persists at the torsion-connection level for every non-Bismut representative of $[\omega]$.

This geometric prediction suggests concrete interferometric tests. Consider the simultaneous measurement of the Berry phases $\Phi_{B}^{+}(\gamma)$ and $\Phi_{B}^{-}(\gamma)$ accumulated along a closed loop $\gamma \subset T^{2}_{\mathrm{BZ}}$ by separately addressing the global and relative phase sectors. If the holonomy of the gauge connection were trivial, one could choose a gauge where both phases vanish for every $\gamma$. The bound $r^\sharp=1$ forbids such a simultaneous vanishing across all loops. For any pair of independent cycles $\{\gamma_1, \gamma_2\}$ in $T^{2}_{\mathrm{BZ}}$, the two-component Berry-phase vector:
\[
\mathbf{\Phi}_B = \bigl(\Phi_{B}^{+}(\gamma_1), \Phi_{B}^{-}(\gamma_1), \Phi_{B}^{+}(\gamma_2), \Phi_{B}^{-}(\gamma_2)\bigr)
\]
cannot be continuously deformed to zero while preserving the mixed cohomology class $[\omega]$.

Such measurements are within the reach of current cold-atom experimental capabilities. Interferometric techniques used to map Berry curvature in Hofstadter bands \cite{Aidelsburger2015} or to probe spin-texture topology in SOC systems \cite{Lin2011} can be adapted to track the two phase degrees of freedom $\phi_{\pm}$ independently \cite{Duca2015}. The required control can be implemented via optical ``painting'' \cite{Henderson2009}, allowing for precise spatiotemporal manipulation of the condensate's phase profile. 

Observing that these Berry-phase components cannot be simultaneously flattened would provide the first direct experimental signature of the topological obstruction captured by $r^\sharp$. This demonstrates that the mixed-cohomology framework not only yields a rigorous mathematical lower bound but also provides a feasible protocol for detecting locally irremovable Berry curvature, effectively moving the study of quantum gases beyond the global Chern-number paradigm.

\section{Conclusions}
\label{sec:conclusions}

We have demonstrated that a two-component spin--orbit-coupled (SOC) BEC possesses an intrinsic geometric obstruction that prevents the simultaneous flattening of Berry curvature along the independent phase directions $\phi_{\pm}$. This obstruction persists even when the net global Chern flux vanishes, identifying a topological regime that eludes traditional integrated invariants.

The extended parameter space $M = T^{2}_{\mathrm{BZ}}\times S^{1}_{\phi_{+}}\times S^{1}_{\phi_{-}}$, equipped with the Kaluza--Klein metric $g_M$ induced by synthetic gauge fields, carries a metric connection $\nabla^{C}$ with totally skew-symmetric torsion. The harmonic part of this torsion represents a mixed cohomology class $[\omega]$ with mixed tensor rank $r=1$. The non-vanishing Berry curvature imposes an algebraic constraint on the vertical parallel 1-forms that is incompatible with the cohomological structure of $[\omega]$, ensuring $\mathcal{K}=0$ and $r^{\sharp}=1$.

By adapting the PT lower bound of \cite{PigazziniToda2025} to the Kaluza--Klein geometry through exact curvature analysis, we have established a three-level non-reducibility structure (Theorem \ref{thm:main}):
\begin{itemize}
\item[\textup{(i)}] For the deformation family $\varepsilon\in(0,1)$: $\dim\mathfrak{hol}^{\mathrm{off}}(\nabla^{C_\varepsilon})\geq 1$ at every point.
\item[\textup{(ii)}] At the physical metric, for every non-Bismut torsion representative: $\dim\mathfrak{hol}^{\mathrm{off}}(\nabla^{C'})\geq 1$ on an open set.
\item[\textup{(iii)}] For the Riemannian holonomy of $g_M$: $\dim\mathfrak{hol}^{\mathrm{off}}(\nabla^{LC})\geq 1$ at every point.
\end{itemize}
The identification of the Bismut cancellation---the exact compensation of the Levi-Civita and contorsion contributions for the natural KK torsion---is both a subtlety and a strength of the analysis: it is a non-generic phenomenon of infinite codimension, broken by any physical perturbation that modifies the torsion representative.

This algebraic constraint implies that momentum degrees of freedom are fundamentally ``locked'' to the phase sectors, an irreducible coupling that cannot be removed by smooth gauge transformations. The persistence of $r^{\sharp}$ in regimes with zero total Chern number highlights that the mixed-rank bound operates beyond the Chern-number paradigm, detecting structural features of the gauge field that integrated invariants fail to resolve.
Crucially, the invariant $r^{\sharp}$ is operationally finer than the mixed rank $r$: while $r=1$ certifies off-diagonal holonomy, it does not determine whether the obstruction survives dimensional reduction along a specific phase circle. The transition $r^{\sharp}=0\to r^{\sharp}=1$ from the product to the Kaluza--Klein metric signals precisely that no single phase-locking protocol can eliminate the topological constraint---a robustness property that $r$ alone cannot detect.

Experimentally, this framework suggests interferometric protocols to resolve independent Berry phases, providing a direct signature of locally irremovable curvature. The Riemannian non-reducibility of the splitting (level (iii)) provides the most robust prediction, as it depends only on the non-vanishing of the individual Berry curvatures and is independent of the choice of torsion representative. We expect this framework to be readily applicable to multicomponent cold-atom systems and synthetic-dimensional lattices, where multiple internal degrees of freedom couple to momentum-space geometry.

\section*{Acknowledgements}

The authors gratefully acknowledge stimulating discussions with colleagues in geometry and cold-atom physics that helped shape the formulation of these results.


\end{document}